\documentclass[preprint, 11pt, english]{elsarticle}
\usepackage{amsmath}
\usepackage{latexsym, amssymb}
\usepackage{amsthm}
\usepackage{txfonts,xcolor}
\usepackage{comment}
\usepackage{stmaryrd}
\usepackage{centernot}
\usepackage{hyperref}

\newtheorem{thm}{Theorem}[section] 
\newtheorem{claim}[thm]{Claim}

\newtheorem{lem}[thm]{Lemma}
\newtheorem{prop}[thm]{Proposition}

\theoremstyle{definition}
\newtheorem{rem}[thm]{Remark}
\newtheorem{exmpl}[thm]{Example}

\newcommand\operA[2]{{\if!#2!\operatorname{#1}\else{\operatorname{#1}_{#2}^{\phantom{I}}}\fi}} 
\newcommand\set[1]{\{#1\}}

\newcommand\charac[1]{\mathrm{char}\left(#1\right)}

\newcommand\Cref[1]{{Corollary~\ref{#1}}}%

\def\tr{{\operatorname{Tr}}}

\def\norm{{\operatorname{Norm}}}

\newcommand{\Trace}[1][]{\if!#1!\operatorname{Tr}\else{\operatorname{Tr}_{#1}^{\phantom{I}}}\fi} 

\long\def\forget#1\forgotten{{}} %
\newcommand\suchthat{{\,:\ \,}}

\def\({\left(}
\def\){\right)}

\newif\iffurther
\furtherfalse

\newif\ifXY 
\XYtrue     
\ifXY

\input xy
\input xyidioms.tex
\usepackage{xy}
\xyoption{all} %
\fi 

\DeclareFontFamily{U} {cmex}{}

\DeclareFontShape{U}{cmex}{m}{n}{
  <-6> cmex5
  <6-7> cmex6
  <7-8> cmex7
  <8-9> cmex8
  <9-10> cmex9
  <10-12> cmex10
  <12-> cmex12}{}

\DeclareSymbolFont{Xcmex} {U} {cmex}{m}{n}

\DeclareMathSymbol{\Xsum}{\mathop}{Xcmex}{80}

\usepackage{babel}

\makeatletter
\def\ps@pprintTitle{%
	\let\@oddhead\@empty
	\let\@evenhead\@empty
	\def\@oddfoot{}%
	\let\@evenfoot\@oddfoot}
\makeatother

\begin{document}

\begin{frontmatter}

\title{General Polynomials and Eigenvalues Over Cayley--Dickson Algebras}
\author{Adam Chapman}
\ead{adam1chapman@yahoo.com}
\address{School of Computer Science, Academic College of Tel-Aviv-Yaffo, Rabenu Yeruham St., P.O.B 8401 Yaffo, 6818211, Israel}
\author{Ilan Levin}
\ead{ilan7362@gmail.com}
\address{Department of Mathematics, Bar-Ilan University, Ramat Gan, 55200, Israel}
\author{Solomon Vishkautsan}
\ead{wishcow@gmail.com}
\address{Department of Computer Science, Tel-Hai Academic College, Upper Galilee, 12208 Israel}

\begin{abstract}
In this paper, we provide an explicit method for determining the zero set of monic quadratic general polynomials over Cayley--Dickson algebras with any base field of characteristic not equal to $2$. For the special case of locally-complex Cayley--Dickson algebras over the reals, we prove that every such polynomial always has a root. We prove the existence of right eigenvalues for any $2 \times 2$ matrix over Cayley's Octonions (the real octonion division algebra), and our root-finding method allows the computation of some of the right eigenvalues.
\end{abstract}

\begin{keyword}
Cayley--Dickson algebras, General Polynomials, Right Eigenvalues, Monic quadratic polynomials, Locally-complex algebras, Alternative Algebras, Division Algebras, Octonion Algebras
\MSC[2020] primary 17A75; secondary 17A45, 17A35, 17D05, 12D10, 16S36
\end{keyword}

\end{frontmatter}

\section{Introduction}

The study of polynomials over non-commutative and non-associative algebras requires distinguishing between standard left polynomials and general polynomials. In the standard ring of left polynomials, the variable is treated as central with respect to the coefficients (it commutes and associates with elements of the algebra). In this paper, however, we look instead at general polynomials, for which the variable does not commute with the coefficients (see Section~\ref{sec:poly} for the definitions of these polynomial rings). For example, the general polynomials $(ij)x, i(xj)$ and $(ix)j$ are distinct over an octonion algebra. General polynomials were first formally studied by Gordon and Motzkin \cite{GordonMotzkin1965} for division algebras, and were later generalized to other algebras by Röhrl \cite{Rohrl:1978} and Wilczy{\'n}ski \cite{Wilczynski:2014}.

A classical question regarding univariate polynomial equations is the existence of roots. Over the complex field $\mathbb{C}$ this is of course the Fundamental Theorem of Algebra. This was generalized to general polynomials over Hamilton's Quaternions $\mathbb{H}$ by Eilenberg and Niven \cite{eilenberg-niven:1944}, and to Cayley's Octonions $\mathbb{O}$ by Jou \cite{jou1950}. Both generalizations impose the extra condition that there is a unique monomial of highest degree. Some polynomials which do not satisfy this condition, like $f(x) = ix - xi + 1$ have no roots (in either algebra). The most general theorem so far regarding the existence of roots is due to Wilczy{\'n}ski \cite{Wilczynski:2014}, who proved the existence of roots for a larger set of polynomials.

A natural generalization of the existence problem of roots for general polynomials over the Quaternions and the Octonions is to consider general polynomials over Cayley--Dickson algebras, as this class of algebras contains both algebras (see Section~\ref{sec:cd} for definition and properties). In this paper, we establish an explicit method for determining the zero set of monic quadratic general polynomials over Cayley--Dickson algebras with any base field of characteristic not equal to 2. Furthermore, for the special case of locally-complex Cayley--Dickson algebras, we prove that every such polynomial always has a root. 



A primary motivation for studying roots of general polynomials over Cayley--Dickson algebras stems from the theory of matrix eigenvalues. Over the complex field, eigenvalues are roots of the characteristic polynomial. In the non-commutative and non-associative setting, left eigenvalues are distinct from right eigenvalues (see Section~\ref{sec:eigenvalues} for the definitions). 
Finding right eigenvalues is particularly important when discussing the diagonalization of matrices, i.e., for solving matrix equations of the form $AU=UD$, where $D$ is diagonal. 
In some cases, left and right eigenvalues over quaternion and octonion algebras have been shown to be roots of certain left and general polynomials. 

The right eigenvalues of matrices over Hamilton's Quaternions were fully described by Lee (\cite{Lee:1949}). The paper \cite{ChapmanMachen:2017} generalized Lee's results to any division algebra. Left eigenvalues are considerably more difficult, and in fact, only the case of $M_2(\mathbb{H})$ was fully solved, see \cite{HuangSo2002}. The existence of left eigenvalues for matrices in $M_n(\mathbb{H})$ was proven by Wood \cite{wood1985}.


Previous work on eigenvalues of octonion matrices includes papers related to the computation of real and non-real left and right eigenvalues of $3\times 3$ hermitian matrices over Cayley's Octonions (see \cite{dray-manogue1998,DrayManogue:2015}). Recent work by two of the authors (see \cite{ChapmanVishkautsan:2025}) has shown that every $2 \times 2$ matrix over Cayley's Octonions admits a left eigenvalue. 

We prove the existence of right eigenvalues for arbitrary $2 \times 2$ matrices over Cayley's Octonions by reducing the problem to the existence of solutions to a monic quadratic general equation. Thus our method for computing the zero-set of monic quadratic polynomials allows some of the eigenvalues to be computed explicitly (see Section~\ref{sec:right-eigenvalues}).

We finish the paper by computing the left and right spectra of triangular matrices over octonion division algebras (see Section~\ref{sec:triangular}). For the case of left eigenvalues, this corrects a proof given in \cite{ChapmanVishkautsan:2025}.




\section{Cayley--Dickson Algebras} \label{sec:cd}


Given an algebra with an involution $a \mapsto \bar{a}$ over a field $F$ 
and an element $\gamma \in F^\times = F \setminus \{ 0 \}$, the Cayley--Dickson doubling process produces a new algebra $B=A \oplus A\ell$, denoted also by $A\{\gamma\}$, where $\ell$ is an indeterminate. The operations in $B$ are defined  by the rules $\lambda (q+r\ell) = \lambda q+\lambda r\ell$, $(q+r\ell)+(s+t\ell)=(q+s)+(r+t)\ell $, and $(q+r\ell)(s+t\ell)=qs+\bar{t}r \gamma+(tq+r\bar{s})\ell$ for any $q,r,s,t\in A,\: \lambda\in F$. The involution then extends to $B$ by $\overline{q+r\ell}=\bar{q}-r\ell$.

For $\charac{F}\ne 2$, set $A_0=F$, with $a \mapsto \bar{a}$ being the identity map. Then applying the Cayley--Dickson process to $A_1$ with an element  $\gamma_1\in{F^\times}$ gives rise to a quaternion algebra $A_2 = A_1 \{ \gamma_1 \}$; applying it again with  $\gamma_2\in{F^\times}$ gives rise to an octonion algebra $A_3 = A_2 \{ \gamma_2 \}$, 
and applying the process a third time with $\gamma_3\in{F^\times}$ gives an algebra $A_4 = A_3 \{ \gamma_3 \}$ known as a sedenion algebra, and so on. By repeating this process an arbitrary number of times, one obtains an infinite sequence $\{A_n\}_{n\in\mathbb{N}}$ of algebras called Cayley--Dickson algebras.

The sequence of Cayley–Dickson doublings is characterized by a progressive loss of algebraic structure. Commutativity fails at $A_2$, associativity at $A_3$, though alternativity holds, and alternativity is lost by $A_4$. However, all algebras in the sequence retain flexibility, thereby remaining power-associative (see \cite{Schafer:1954}).

We define the standard basis $E_n = \{ e_m^{(n)} \; | \; m = 0 , \dots, 2^n - 1\}$ of $A_n$ inductively:
\begin{enumerate}[(1)]
    \item $e_0^{(0)} = 1$;
    \item $e_m^{(n+1)} = \begin{cases}
    e_m^{(n)}, & 0 \leq m \leq 2^n - 1,\\
    e_{m - 2^n}^{(n)}\ell_{n+1}, & 2^n \leq m \leq 2^{n+1} - 1,
    \end{cases}
    \quad n \in \mathbb{N}_{0}$,
\end{enumerate}
(with  $\ell_n$ the indeterminate used to define $A_n$). It can be easily seen that $e_0^{(n)}=1$ for any $n\ge 0$. The elements of $E_n \setminus \{ 1 \}$ anticommute pairwise, that is, $e_k^{(n)} e_m^{(n)} = - e_m^{(n)} e_k^{(n)}$ for $1 \leq k < m \leq 2^n - 1$.

All Cayley--Dickson algebras are equipped with a linear trace map $\tr(\lambda)=\lambda+\bar\lambda$ and quadratic norm map $\norm(\lambda)=\lambda \cdot \bar\lambda$ with values in the base field $F$. Any element $\lambda$ in a Cayley--Dickson algebra satisfies the quadratic equation $\lambda^2-\tr(\lambda)\lambda+\norm(\lambda)=0$.

A real unital algebra is called locally-complex if any nonscalar element generates a subalgebra isomorphic to $\mathbb{C}$ (see \cite{brevsar2011locally}). In \cite{Chapman-et-al:2023} it is proven that locally-complex Cayley--Dickson algebras are exactly the algebras in the classical Cayley--Dickson doubling sequence generated from $F=\mathbb{R}$ by setting $\gamma_n = -1$ for all $n$. The first few algebras in this sequence are $A_1=\mathbb{C},$ the complex field; $A_2 = \mathbb{H},$ Hamilton's Quaternions; $A_3 = \mathbb{O},$ Cayley's Octonions; $A_4 = \mathbb{S},$ the real sedenions.

\section{Cayley--Dickson Polynomials} \label{sec:poly}

When working with polynomials whose coefficients lie in Cayley--Dickson algebras, there are two common types of constructions, left polynomials and general polynomials. We recall the constructions of these polynomial algebras below. 

\subsection{Left Polynomials}

Given a Cayley--Dickson algebra $A$ over a field $F$, the ring of \emph{left} (or \emph{standard}) polynomials $A[x]$ is defined to be 
\begin{equation} \label{eq:left-polynomials}
    A[x] = A \otimes_F F[x],
\end{equation}
i.e., the scalar extension of $A$ to the polynomial ring in one variable over $F$.
The variable $x$ is thus central in $A[x]$. Each polynomial $f(x) \in A[x]$ can therefore be written with the powers of $x$ placed on the right-hand side and the coefficients placed on the left-hand side (which is why they are referred to as left polynomials), $f(x)=\sum_{i=0}^m c_i x^i$.

For any $\lambda \in A$, we can define the substitution map $\Psi_\lambda: A[x]\to A$ by $\Psi_\lambda(f)=\sum_{i=0}^m c_i \lambda^i$, and we denote $f(\lambda)=\Psi_\lambda(f)$ as usual. Note that $\Psi_\lambda$ is in general not a ring homomorphism.

One of the benefits of left polynomials is that for Cayley--Dickson \emph{division} algebras of dimension at most $8$, we have that $f(x)\in A[x]$ decomposes as $g(x)(x-\lambda)$ if and only if $f(\lambda)=0$, i.e., $(x-\lambda)$ is a linear right factor of $f(x)$ if and only if $\lambda$ is a root of $f(x)$ (see \cite{Wedderburn:1921,GordonMotzkin1965} for associative division algebras, and \cite{Chapman:2020b} for octonion algebras). This property fails for Cayley--Dickson algebras of dimension $16$ and above (see \cite{ChapmanVishkautsan:2025}).

\subsection{General Polynomials}

Given a Cayley--Dickson algebra $A$ over a field $F$, we want to define polynomials whose variable does not behave as central with respect to the coefficients from the algebra $A$. This was first done formally by Gordon and Motzkin in \cite{GordonMotzkin1965} for division algebras $A$, and was generalized by R\"ohrl \cite{Rohrl:1978} and Wilczy{\'n}ski \cite{Wilczynski:2014} to more general $F$-algebras (i.e., not necessarily commutative or associative). We follow the latter's construction. 

The algebra of formal polynomials over $A$ is defined as the free product of $A$ and the free $F$-algebra $F\langle(\!(x)\!)\rangle$ on the variable $x$.
\begin{equation}\label{eq:formal-polynomials}
    A\langle(\!(x)\!)\rangle = A * F\langle(\!(x)\!)\rangle.
\end{equation}
Note the similarity with \eqref{eq:left-polynomials}.

However, the free product $A\langle(\!(x)\!)\rangle$ defined in \eqref{eq:formal-polynomials} only satisfies the axioms of an arbitrary $F$-algebra (not necessarily unitary). In particular, it does not naturally inherit the defining polynomial identities of the base Cayley--Dickson algebra $A$. For example, if $A$ is the quaternion algebra (which is associative), the free product $A\langle(\!(x)\!)\rangle$ is not associative. If $A$ is an octonion algebra (which is alternative), $A\langle(\!(x)\!)\rangle$ fails to be alternative. Similarly, for Cayley--Dickson algebras of dimension $16$ and above, properties such as flexibility and power-associativity are lost in the free product.

To obtain a polynomial algebra that behaves algebraically like the base algebra $A$, that is, one that belongs to the same class of algebras, we must take the quotient of $A\langle(\!(x)\!)\rangle$ by the ideal generated by the relations corresponding to these identities. Let $I$ be the two-sided ideal of $A\langle(\!(x)\!)\rangle$ generated by the defining identities of the specific Cayley--Dickson algebra in question. The \emph{general polynomial algebra} reflecting the structure of $A$ is then defined as the quotient algebra
\begin{equation} \label{eq:general-polynomials-quotient}
    A_G[x] = A\langle(\!(x)\!)\rangle / I.
\end{equation}

A single uniform definition for general polynomials over the entire Cayley--Dickson sequence is therefore impossible. The construction depends intrinsically on the step in the doubling sequence, requiring us to impose the specific algebraic identities characteristic of the underlying algebra $A$.

\section{Monic quadratic general polynomials over Cayley--Dickson algebras}

The following section is a generalization to higher Cayley--Dickson algebras over an arbitrary field of characteristic not $2$ of the work presented in \cite{Xu2014Zeros}.
Let $F$ be a field of $\charac{F} \neq 2$. Let $A_n$ be the $2^n$-dimensional $F$-algebra formed by iterating the Cayley--Dickson construction $n$ times starting with the base field $F$. Set $e_0, e_1, \dots, e_m$ to be the standard basis of $A_n$, with $e_0 = 1$, where $m = 2^n-1$. Recall that the base elements satisfy $e_ie_j = -e_je_i$ for any nonzero $i \neq j$. Denote $e_i^2 = \alpha_i \in F$ for $1\le i \le m$.

We are interested in solving monic quadratic general polynomials over $A_n$. Namely, polynomials $f(x)$ that have one quadratic term $x^2$, a finite number of linear terms, and a free term. A linear term in this context is a product of elements from $A_n$ and precisely one $x$, in any order and any positioning of brackets. For example, $f(x)=x^2+e_1((xe_2)e_4)e_1+34(e_1+e_2)x+8=0$ is a monic quadratic polynomial over $\mathbb{O}$, Cayley's Octonions over the reals.

Let 
\begin{equation} \label{eq:monic-quadratic}
f(x) = x^2 + f_1(x) + r    
\end{equation}
be a quadratic monic general polynomial, where $f_1(x)$ is its linear part. Write $x = t + \sum\limits_{i=1}^{m}x_ie_i$ and $r = \sum\limits_{i=0}^{m}r_ie_i$. For $i = 0, 1, \dots, m$, define 
$G_i : A_n \to F$
that sends 
\[ x \mapsto \pi_i(f_1(x))\]
where $\pi_i : A_n \to F$ is the projection on $Fe_i$. We thus get the decomposition
\begin{equation} \label{eq:linear-part-decomp}
f_1(x) = \sum_{i=0}^m G_i(x)e_i.    
\end{equation}

Note that $G_i$ are linear maps, as they are compositions of linear maps. So we can write for each $0\le i\le m$ 
$$G_i(t, x_1, \dots, x_m) = g_{i,0}t + g_{i,1}x_1 + \dots + g_{i,m}x_m, $$
with $g_{i,k}\in F$ for $0\le k\le m$.

Now, substituting the decomposition \eqref{eq:linear-part-decomp} into $f(x) = 0$, we get the following system of equations:


\begin{align*}
t^2+\alpha_1 x_1^2 +\dots+ \alpha_m x_m^2 + G_0(t,x_1,\dots,x_{m})+r_0 &= 0 \tag{$E_0$} \label{eq:E_0}\\
2tx_1 + G_1(t,x_1,\dots,x_{m})+r_1 &= 0 \tag{$E_1$} \label{eq:E_1}\\
&\ \vdots \\
2tx_m + G_m(t,x_1,\dots,x_{m})+r_m &= 0 \tag{$E_m$} \label{eq:E_m}
\end{align*}

Now think of $t$ as fixed, in which case equations \eqref{eq:E_1}, \ldots, \eqref{eq:E_m} boil down to the single matrix equation over $F$:
\begin{align*}
&\qquad
\begin{pmatrix}
2t+g_{1,1} & g_{1,2} & \dots & g_{1,m} \\
g_{2,1} & 2t+g_{2,2} & \dots & g_{2,m} \\
\vdots  & \vdots  & \ddots & \vdots \\
g_{m,1} & g_{m,2} & \dots & 2t+g_{m,m}
\end{pmatrix}
\cdot \begin{pmatrix}
x_1 \\
x_2 \\
\vdots \\
x_m
\end{pmatrix}
= \begin{pmatrix}
-r_1-g_{1,0}t \\
-r_2-g_{2,0}t \\
\vdots \\
-r_m-g_{m,0}t
\end{pmatrix} \tag{E} \label{eq:E}
\end{align*}

Denote the $m$ by $m$ matrix in \eqref{eq:E} as $M(t)$ and the constant vector in \eqref{eq:E} by $r(t)$. By Cramer's rule, when $\operatorname{det}(M(t)) \neq 0$, the unique solution vector to \eqref{eq:E} is given by $x_i(t) = \frac{\operatorname{det}(M_i(t))}{\operatorname{det}(M(t))}$, where $M_i(t)$ is the matrix obtained by substituting $r(t)$ in the $i$-th column of $M(t)$. Denote the determinants of $M(t),M_i(t)$ by $D(t), D_i(t)$, respectively. 

Notice that the set $S := \{t \in F \suchthat D(t) = 0 \}$ is finite and $|S| \leq m$. Of the values in $S$, we only care about the subset $S' := S \cap \set{t \suchthat M(t)\vec{x} = r(t) \text{ has a solution}}$. As we will see, the values in $S'$
will help us determine the geometric structure of the zero set of \eqref{eq:monic-quadratic}.
Before proving a lemma about the number of possible trace values of the zero set, we prove the following useful claim:
\begin{claim} \label{compatible}
    If $t_0 \in S'$, then $t-t_0 \mid \operatorname{gcd}(D_i(t), D(t))$ for all $1 \leq i \leq m$. As a consequence, after cancellation of common factors, each $x_i(t)$ is a quotient of coprime polynomials of degree at most $m-|S'|$.
\end{claim}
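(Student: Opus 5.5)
The argument is essentially linear algebra over $F$: for a fixed $t_0\in S'$ I would show directly that $D_i(t_0)=0$ for every $i$. The divisibility then follows from the factor theorem over the field $F$, and the degree bound comes from counting degrees in $t$.

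\emph{Step 1: $D_i(t_0)=0$.} Since $t_0\in S\subseteq S'$'s defining set, $D(t_0)=\det M(t_0)=0$, so the column space $W\subseteq F^m$ of $M(t_0)$ has dimension strictly less than $m$. Because $t_0\in S'$, the system $M(t_0)\vec x=r(t_0)$ is solvable, so $r(t_0)\in W$. The matrix $M_i(t_0)$ is obtained from $M(t_0)$ by replacing its $i$-th column with $r(t_0)$. All $m$ of its columns therefore lie in $W$, so they are linearly dependent and $D_i(t_0)=\det M_i(t_0)=0$. Substituting $t=t_0$ commutes with taking determinants, since the determinant is a polynomial in the entries. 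Hence $t_0$ is a common root of the polynomials $D(t),D_i(t)\in F[t]$, and $t-t_0$ divides both of them and hence their gcd. If some $D_i$ is the zero polynomial, the divisibility is trivial and $x_i(t)\equiv 0$; this case I treat separately.

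\emph{Step 2: degrees.} Every entry of $M(t)$ has degree at most $1$ in $t$. The diagonal entries $2t+g_{k,k}$ have exactly degree $1$ because $\charac{F}\neq 2$. Hence $D(t)$ has leading term $2^m t^m$ and degree exactly $m$. In $M_i(t)$ the replaced column has entries $-r_j-g_{j,0}t$, also of degree at most $1$. Expanding by the Leibniz formula shows that $\deg D_i\le m$.

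\emph{Step 3: cancellation.} The elements of $S'$ are distinct, so the linear factors $t-t_0$ for $t_0\in S'$ are pairwise coprime. By Step 1, their product $\prod_{t_0\in S'}(t-t_0)$, of degree $|S'|$, divides $\gcd(D_i,D)$. Writing $x_i(t)=D_i(t)/D(t)$ and dividing numerator and denominator by the full gcd gives coprime polynomials. The divided denominator has degree $m-\deg\gcd(D_i,D)\le m-|S'|$, and the divided numerator has degree at most $\deg D_i-\deg\gcd(D_i,D)\le m-|S'|$. This identity of rational functions is valid for $t\notin S$, where Cramer's rule applies.

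I do not expect a real obstacle. The only points needing care are the degree bound on $D_i$ and the use of $\charac{F}\ne2$ to get $\deg D=m$, which is also what gives $|S|\le m$. The key conceptual step is the column-space argument in Step 1.
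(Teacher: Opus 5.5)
Your proposal is correct and takes the same approach as the paper. The paper also argues that $r(t_0)$ lies in the column space of the singular matrix $M(t_0)$, so the columns of $M_i(t_0)$ are dependent, $D(t_0)=D_i(t_0)=0$, and $t-t_0$ divides both polynomials. Your Steps 2 and 3 (using $\charac{F}\neq 2$ to get $\deg D=m$, and the product of the distinct factors $t-t_0$ for $t_0\in S'$ dividing each $\gcd(D_i,D)$) spell out the degree count the paper leaves implicit; the only slip is the reversed inclusion in Step 1, which should read $t_0\in S'\subseteq S$.
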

\begin{proof}
    Since $t_0 \in S'$, the columns of $M(t_0)$ are linearly dependent and $r(t_0) \in \textrm{Im}(M(t_0))$, so the columns of $M_i(t_0)$ are also linearly dependent. Thus $D(t_0) = D_i(t_0) = 0$, and therefore $t-t_0$ is a factor of both polynomials.
\end{proof}

Now we are ready to prove the following lemma:
\begin{lem}
    There are at most $2+2m-|S'|$ possible trace values of elements in the zero set of a quadratic monic polynomial $f(x)$.
\end{lem}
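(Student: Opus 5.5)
Note first that the trace of a zero $x=t+\sum x_ie_i$ is $2t$, so it suffices to bound the number of values of $t$ that occur as real parts of zeros of $f$. We split these values into two classes: those with $D(t)=0$, and those with $D(t)\neq 0$.

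\emph{First class.} If $D(t)=0$ and $t$ is the real part of a zero, then the system \eqref{eq:E} is solvable at $t$, so $t\in S'$. This class therefore contributes at most $|S'|$ values. (One could even drop it and count only the second class; in the worst case it adds $|S'|$.)

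\emph{Second class.} If $D(t)\neq 0$, the linear system \eqref{eq:E} has the unique solution $x_i(t)=D_i(t)/D(t)$ by Cramer's rule. The zero condition then reduces to the single equation \eqref{eq:E_0}:
\[
t^2+\sum_{i=1}^m \alpha_i x_i(t)^2+g_{0,0}t+\sum_{i=1}^m g_{0,i}x_i(t)+r_0=0 .
\]
By Claim~\ref{compatible}, after cancellation each $x_i(t)$ can be written as $P_i(t)/Q(t)$. Here $Q(t)=D(t)/\prod_{t_0\in S'}(t-t_0)$ is a common denominator, and both $P_i$ and $Q$ have degree at most $m-|S'|$. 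This relies on $(t-t_0)\mid D_i$ for every $i$, so the division is uniform. If some $t_0$ is a multiple root of $D$ we only cancel one factor, which still suffices.

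Next multiply \eqref{eq:E_0} by $Q(t)^2$. This gives a polynomial equation
\[
H(t):=(t^2+g_{0,0}t+r_0)Q(t)^2+\sum_i\alpha_iP_i(t)^2+\sum_i g_{0,i}P_i(t)Q(t)=0
\]
of degree at most $2+2(m-|S'|)$. Every $t$ in the second class satisfies $Q(t)\neq0$ and $H(t)=0$.

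The main point to check is that $H$ is not the zero polynomial. Since $D$ is monic of degree $m$ in $t$ (leading coefficient $2^m$), the polynomial $Q$ has exact degree $m-|S'|$. The term $t^2Q(t)^2$ then has degree exactly $2+2(m-|S'|)$. The other terms have degree at most $2(m-|S'|)+1$, because $\deg(t^2+g_{0,0}t+r_0)Q^2$ minus its top part, $\deg P_i^2$, and $\deg P_iQ$ all stay below $2+2(m-|S'|)$. So $H$ has degree exactly $2+2m-2|S'|$, and the second class contains at most $2+2m-2|S'|$ values.

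\emph{Conclusion.} Adding the two classes gives at most $2+2m-2|S'|+|S'|=2+2m-|S'|$ values of $t$. Hence there are at most that many trace values $2t$ among the zeros of $f$.
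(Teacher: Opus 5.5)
Your proof is correct and follows the paper's argument. You split $t$ according to whether $D(t)=0$ (which forces $t\in S'$) or not, clear the common denominator from Claim~\ref{compatible} in \eqref{eq:E_0}, and count the roots of the resulting polynomial of degree at most $2+2(m-|S'|)$. Your version is also slightly more careful than the paper's in two places the paper leaves implicit: you use an explicit common denominator $Q=D/\prod_{t_0\in S'}(t-t_0)$, and you check through the leading term $4^m t^{2+2(m-|S'|)}$ (nonzero since the characteristic is not $2$) that this polynomial is not identically zero.
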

\begin{proof}
    Notice that the value of $t$ in a root $x$ of $f(x)$ determines the value of the trace of $x$ (and vice versa).
    Our discussion splits into three cases. If $t \in S \setminus S'$, there are no solutions to \eqref{eq:monic-quadratic}, and therefore $x$ is not a root of $f(x)$. If $t \notin S$, then there is a potential unique solution $x_i(t) = D_i(t)/D(t)$ coming from equations \eqref{eq:E_1}--\eqref{eq:E_m}; denote the result of cancellation coming from Claim~\ref{compatible} by $x_i = \bar{D}_i(t)/\bar{D}(t)$. When the values of $x_1,\ldots,x_m$ are substituted into \eqref{eq:E_0} we get:
    \[t^2+\sum\limits_{i=1}^{m}\alpha_i\left(\frac{\bar{D}_i(t)}{\bar{D}(t)}\right)^2+G_0\left(t,\frac{\bar{D}_1(t)}{\bar{D}(t)},\dots,\frac{\bar{D}_m(t)}{\bar{D}(t)}\right)+r_0=0.\]
    Multiplying both sides by $\bar{D}(t)^2$ (which is nonzero because $t \notin S$) yields: 
    \[(t\bar{D}(t))^2+\sum\limits_{i=1}^{m}\alpha_i\bar{D}_i(t)^2+G_0\left(t\bar{D}(t)^2,\bar{D}_1(t)\bar{D}(t),\dots,\bar{D}_m(t)\bar{D}(t)\right)+r_0\bar{D}(t)^2=0.\]
    The left hand side of the new equation is a polynomial in $t$ of degree at most $2+2(m-|S'|)$. So there are at most $2+2(m-|S'|)$ possible trace values when $t \notin S'$. Combining with the case $t \in S'$, we get that there are $2+2(m-|S'|)+|S'| = 2+2m-|S'|$ possible trace values for the zero set of $f(x)$.
\end{proof}

The lemma sheds light on the geometry of the zero set. It shows that if $x$ is a root of $f(x)$ with $t \notin S'$ (and therefore $t\notin S$), then $x$ is an isolated root, and there are at most $2+2(m-|S'|)$ of them. This leaves the case when $t \in S'$ to be studied. 

\begin{lem}
Let $x = t + \sum_{i=1}^{m}x_ie_i$ be a root of a monic quadratic general polynomial $f(x)$.
If $t \in S'$, then the solutions $f(x) = 0$ with $\tr(x)/2 = t$ are the intersection of a quadric and a $k$-dimensional affine subspace of $F^{m+1}$, where $1 \leq k \leq m$. 
\end{lem}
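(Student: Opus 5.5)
Fix $t=t_0\in S'$. The plan is to describe the solution set of $f(x)=0$ with $\tr(x)/2=t_0$ as the points $(t_0,x_1,\dots,x_m)\in F^{m+1}$ satisfying the system \eqref{eq:E_0}--\eqref{eq:E_m} with $t=t_0$. Equations \eqref{eq:E_1}--\eqref{eq:E_m} at $t=t_0$ are exactly the linear system \eqref{eq:E}, namely $M(t_0)\vec{x}=r(t_0)$. The remaining equation \eqref{eq:E_0} is a quadratic equation in $(t,x_1,\dots,x_m)$. So the solution set is the intersection of the quadric $Q$ defined by
\[t^2+\alpha_1x_1^2+\dots+\alpha_mx_m^2+G_0(t,x_1,\dots,x_m)+r_0=0\]
in $F^{m+1}$ with the set $V=\{(t_0,\vec{x}) \suchthat M(t_0)\vec{x}=r(t_0)\}$. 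It remains to show that $V$ is an affine subspace of dimension $k$ with $1\le k\le m$.

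First, $V$ is nonempty because $t_0\in S'$ means the system \eqref{eq:E} at $t_0$ is consistent. So $V$ is a translate $\{t_0\}\times(\vec{x}^{(0)}+\ker M(t_0))$ of the kernel, i.e. an affine subspace of $F^{m+1}$ contained in the hyperplane $t=t_0$. Its dimension is $k=\dim\ker M(t_0)=m-\operatorname{rank}M(t_0)$. Since $t_0\in S$, we have $D(t_0)=\det M(t_0)=0$, so the rank is at most $m-1$ and $k\ge 1$. Trivially $k\le m$, with equality exactly when $M(t_0)=0$, i.e. when the linear part acts so that $g_{i,j}=-2t_0\delta_{ij}$ for $1\le i,j\le m$.

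Finally, I would remark that $Q$ really is a quadric: its quadratic part $t^2+\sum\alpha_ix_i^2$ is nondegenerate, since each $\alpha_i\in F^\times$. This holds because $\alpha_i=e_i^2$ is, up to sign, a product of the nonzero $\gamma_j$'s. The statement does not require the intersection $Q\cap V$ to be nonempty, so no further work is needed. There is no real obstacle here; the only point needing care is the lower bound $k\ge1$, which comes directly from $D(t_0)=0$.
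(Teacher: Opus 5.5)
Your proof is correct and follows the same approach as the paper. The solutions with $\tr(x)/2=t_0$ are those of the linear system \eqref{eq:E} at $t_0$, cut out by the quadric \eqref{eq:E_0}. You supply the details the paper leaves implicit: the affine subspace is nonempty because $t_0\in S'$, and its dimension $m-\operatorname{rank}M(t_0)$ is at least $1$ because $D(t_0)=0$.
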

\begin{proof}
    The solutions of $(E)$ with $\tr(x)/2 = t$ form a $k$-dimensional affine subspace, where $1 \leq k \leq m$. These solutions are zeros of $f(x)$ if they also satisfy the quadric \eqref{eq:E_0}.
\end{proof}

We now summarize everything discussed so far in the following theorem:
\begin{thm} \label{geom-of-zeros}
    Given a quadratic monic general polynomial $f(x)$, the set of solutions to $f(x)=0$ consists of a finite set of isolated points and the union of intersections of the quadric \eqref{eq:E_0} with at most $m$ affine subspaces of positive dimension. Together, the total number of isolated points and affine subspaces does not exceed $2m+2$. 
\end{thm}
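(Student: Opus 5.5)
The plan is to partition the zero set of $f$ according to the value $t=\tr(x)/2$ and then combine the two preceding lemmas. Fix a root $x=t+\sum_{i=1}^m x_ie_i$. Since $x$ satisfies \eqref{eq:E_1}--\eqref{eq:E_m}, the vector $(x_1,\dots,x_m)$ solves \eqref{eq:E} at this $t$. So either $t\notin S$, or $t\in S'$; the case $t\in S\setminus S'$ cannot occur. This gives the decomposition
\[
f^{-1}(0)=\Bigl(f^{-1}(0)\cap\{\tr(x)/2\notin S\}\Bigr)\;\cup\;\bigcup_{t_0\in S'}\Bigl(f^{-1}(0)\cap\{\tr(x)/2=t_0\}\Bigr).
\]

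For the first piece, $t\notin S$ means $D(t)\neq 0$. Cramer's rule then determines each $x_i$ uniquely as $\bar D_i(t)/\bar D(t)$, after the cancellation allowed by Claim~\ref{compatible}. So each admissible $t$ gives at most one root. As in the proof of the first lemma, $t$ must be a zero of the polynomial obtained by clearing denominators in \eqref{eq:E_0}. That polynomial has degree at most $2+2(m-|S'|)$. I will check that it is not identically zero: after substituting $x_i=\bar D_i/\bar D$, the leading term of $t^2\bar D(t)^2$ should not cancel. If that fails, I will fall back on the count in the first lemma as stated. Hence this piece is a finite set of isolated points, with at most $2+2(m-|S'|)$ elements. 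These points are isolated because their traces lie in a finite set, and each trace carries a single point.

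For the second piece, fix $t_0\in S'$. By the second lemma, the roots with trace $2t_0$ are the intersection of the quadric \eqref{eq:E_0} with an affine subspace of $F^{m+1}$. That subspace is $\{t_0\}\times(\text{solution set of }M(t_0)\vec x=r(t_0))$, and its dimension $k=\dim\ker M(t_0)$ satisfies $1\le k\le m$. There is one such subspace for each element of $S'$, and $|S'|\le|S|\le m$ because $D(t)$ is a monic polynomial of degree $m$ in $t$ (its leading term is $(2t)^m$ and $\charac F\neq 2$). This gives at most $m$ affine subspaces of positive dimension.

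The total count is then $(2+2m-2|S'|)+|S'|=2+2m-|S'|\le 2m+2$. The main obstacle is to keep the accounting consistent. Each $t_0\in S'$ must be counted exactly once, as a subspace and not also as a potential isolated trace value. After the cancellation of Claim~\ref{compatible}, a point of $S'$ could be a root of the reduced polynomial $\bar D(t)$ or of the degree-$2+2(m-|S'|)$ polynomial. This is why the first piece is restricted to $t\notin S$, and why the degree bound on $\bar D$ (at most $m-|S'|$) from Claim~\ref{compatible} is needed to get the saving of $|S'|$.
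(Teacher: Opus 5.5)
Your proposal is correct and follows the paper's proof. Like the paper, you partition the zero set by $t=\tr(x)/2$, use the first lemma for $t\notin S$ (isolated points) and the second lemma for $t\in S'$ (affine pieces), and then use $|S'|\le|S|\le m$ to get $2+2m-|S'|\le 2m+2$. The non-vanishing check you flag, which the paper leaves implicit, does go through: $\deg\bar D_i\le\deg\bar D$, so the term $t^2\bar D(t)^2$ dominates. One small correction is that $D(t)$ has leading coefficient $2^m$, not $1$, so it is not monic, though it still has degree exactly $m$ because $\charac{F}\neq 2$.
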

\begin{proof}
    This is a direct result of the two previous lemmas and the fact that $0 \leq |S| \leq m$.
\end{proof}


\subsection{Locally-complex Cayley--Dickson algebras}
We look at the case of locally-complex Cayley--Dickson algebras. Recall that this is the sequence of algebras $A_n$ over $\mathbb{R}$, where for each $n\ge 1$ we have $\ell_n^2 = -1$. This implies that for each basis element $e_i, i \ge 1$ we have $\alpha_i = e_i^2=-1$. Moreover, the quadric \eqref{eq:E_0} is a spherical equation in the variables $x_1,\dots,x_m$:

\begin{equation} \label{eq:E0R}
 \left(x_1-\frac{g_{0,1}}{2}\right)^2+\dots+\left(x_m-\frac{g_{0,m}}{2}\right)^2 - \left(t^2+g_{0,0}t+\frac{1}{4}(g_{0,1}^2+\dots+g_{0,m}^2)+r_0\right)=0   \tag{$E_0^\mathbb{R}$}
\end{equation}

Theorem~\ref{geom-of-zeros} is restated as follows:
\begin{thm}
    Given a quadratic monic general polynomial over a locally-complex Cayley--Dickson~algebra, the set of zeroes consists of a finite set of isolated points and at most $m$ spheres, not necessarily of the same dimension. Together, the number of spheres and isolated points does not exceed $2m+2$.
\end{thm}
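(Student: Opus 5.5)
The plan is to specialize Theorem~\ref{geom-of-zeros} to $F=\mathbb{R}$ with all $\alpha_i=-1$. The only new ingredient is identifying the geometric shape of each piece. The counting statements, namely finitely many isolated points, at most $m$ positive-dimensional pieces, and at most $2m+2$ pieces in total, carry over verbatim from Theorem~\ref{geom-of-zeros}. This holds because $|S'|\le |S|\le m$, and because the two preceding lemmas bound the number of trace values giving isolated roots by $2+2(m-|S'|)$ and the number of affine pieces by $|S'|$.

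First, I will verify that \eqref{eq:E_0} takes the form \eqref{eq:E0R}. With $\alpha_i=-1$, equation \eqref{eq:E_0} reads
\[t^2-\sum_i x_i^2+g_{0,0}t+\sum_i g_{0,i}x_i+r_0=0.\]
Multiplying by $-1$ and completing the square in each $x_i$ gives \eqref{eq:E0R}. Hence, for fixed $t$, the $x$-part of a solution lies on a sphere in $\mathbb{R}^m$ centered at $(g_{0,1}/2,\dots,g_{0,m}/2)$, with squared radius
\[\rho(t)=t^2+g_{0,0}t+\tfrac14\sum_i g_{0,i}^2+r_0.\]

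Second, I will treat each $t_0\in S'$. By the second lemma, the solutions with trace $2t_0$ are the points $(t_0,x)$ such that $x$ lies in the $k$-dimensional affine subspace $V=\{x: M(t_0)x=r(t_0)\}$ and also on the sphere $\|x-c\|^2=\rho(t_0)$. The key step is elementary Euclidean geometry. Let $p$ be the orthogonal projection of $c$ onto $V$. By Pythagoras, $\|x-c\|^2=\|x-p\|^2+\|p-c\|^2$ for $x\in V$. The intersection is therefore $\{x\in V:\|x-p\|^2=\rho(t_0)-\|p-c\|^2\}$. This set is a $(k-1)$-dimensional sphere inside $V$ centered at $p$ when the right-hand side is positive, a single point when it is zero, and empty when it is negative.

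Finally, I will assemble the count. Degenerate spheres (single points) and empty intersections only lower the number of genuine spheres, and a single point can be counted as a $0$-radius sphere or absorbed into the isolated points without increasing the total. So the zero set is a finite set of isolated points together with at most $|S'|\le m$ spheres of possibly different dimensions $k-1$, and the combined count is at most $2+2(m-|S'|)+|S'|\le 2m+2$. The main subtlety is exactly this Pythagorean reduction, and in particular the bookkeeping for the case where the radius is zero. It needs care only because the positive-definiteness of the Euclidean norm over $\mathbb{R}$ is what turns the quadric-affine intersection into a sphere, which is precisely where the locally-complex hypothesis is used.
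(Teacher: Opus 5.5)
Your proposal is correct and follows the same route as the paper. You specialize Theorem~\ref{geom-of-zeros}, note that with all $\alpha_i=-1$ the quadric \eqref{eq:E_0} becomes the sphere \eqref{eq:E0R}, and observe that at each $t_0\in S'$ its intersection with the $k$-dimensional solution space of \eqref{eq:E} is empty or a $(k-1)$-sphere; your orthogonal-projection argument and zero-radius bookkeeping just make explicit what the paper's one-line proof leaves implicit.
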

\begin{proof}
    For each $t\in S'$ the equation $\eqref{eq:E0R}$ defines an $m-1$-dimensional sphere, and its intersection with a $k$-dimensional affine subspace (where $1\le k \leq m$), which is the solution to \eqref{eq:E}, is either empty or an $(k-1)$-dimensional sphere.
\end{proof}



We can use the methods of this section to extend the fundamental theorem of algebra to locally-complex Cayley--Dickson algebras for the special case of quadratic monic general polynomials.

\begin{thm} \label{thm:real-quadratic}
    Every quadratic monic general polynomial $f(x)$ over a locally-complex Cayley--Dickson~algebra has a zero.
\end{thm}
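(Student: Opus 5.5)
I would prove Theorem~\ref{thm:real-quadratic} by a topological degree argument in the spirit of Eilenberg--Niven and Jou, rather than through the case analysis of Theorem~\ref{geom-of-zeros}. Identify $A_n$ with $\mathbb{R}^{2^n}$ and use the Euclidean norm $|x|^2=\norm(x)=t^2+x_1^2+\dots+x_m^2$ (recall that every $\alpha_i=-1$). Consider the homotopy
\[
f_s(x)=x^2+s\bigl(f_1(x)+r\bigr),\qquad s\in[0,1],
\]
which connects $f_0(x)=x^2$ to $f_1(x)+x^2+r=f(x)$. If $f$ had no zero, then $f$, and by the homotopy also $x\mapsto x^2$, would have degree $0$ as maps $S^{2^n-1}_R\to\mathbb{R}^{2^n}\setminus\{0\}$ on a large sphere of radius $R$. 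It therefore suffices to show two things: that no $f_s$ vanishes on $|x|=R$, and that the squaring map has nonzero degree.

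The first is the a priori bound. Because $A_n$ is locally complex, $x$ lies in a subalgebra isomorphic to $\mathbb{C}$, so $|x^2|=|x|^2$ even though the norm is not multiplicative on $A_n$ for $n\ge4$. The linear part $f_1$ is an $\mathbb{R}$-linear map $\mathbb{R}^{2^n}\to\mathbb{R}^{2^n}$, namely the matrix $(g_{i,k})$ above, so $|f_1(x)|\le C|x|$ with $C$ its operator norm. Hence for $R>C+|r|+1$ and $|x|=R$,
\[
|f_s(x)|\ge R^2-CR-|r|>0 .
\]
So the degree of $f_s$ on the sphere is constant in $s$, and it equals $\deg(x\mapsto x^2)$.

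The main obstacle is computing this degree and showing it is nonzero; I expect it to be $2$. I would take a regular value $q$ that is not real. Since $x^2=q$ forces $x$ into the subalgebra $\mathbb{R}[q]\cong\mathbb{C}$, there are exactly two preimages, $\pm\sqrt q$ taken in that copy of $\mathbb{C}$. Next I would compute the differential $h\mapsto xh+hx$ at $x=a+bu$, where $u^2=-1$ and $b\ne0$. Split $h\in\mathbb{R}[u]\oplus\mathbb{R}[u]^\perp$. On $\mathbb{R}[u]$ the differential is multiplication by $2x$, which has positive determinant $4|x|^2$. On the orthogonal complement, anticommutation with the imaginary unit $u$ gives $xh+hx=2ah$; this uses that each $e_i$ with $i\ge1$ anticommutes with $u$ in the appropriate basis, which I would check via flexibility and the trace form. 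That block is $2a\,\mathrm{Id}$, with determinant $(2a)^{2^n-2}>0$, because $2^n-2$ is even and $a\ne0$ when $q$ is not a negative real. So both preimages count $+1$ and the degree is $2$.

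If the orthogonal-complement computation proves delicate in the non-alternative sedenion range, my fallback is the elementary route through equations \eqref{eq:E_0}--\eqref{eq:E_m}. Since $M(t)=2tI+(g_{i,k})$, for $|t|$ large the solution $x_i(t)$ of \eqref{eq:E} is bounded. The function $h(t)=t^2-\sum x_i(t)^2+G_0+r_0$ then tends to $+\infty$. Near a real root of $D$ lying in $S\setminus S'$, the $x_i(t)$ blow up and $h\to-\infty$, so the intermediate value theorem gives a root; the points of $S'$ are handled through \eqref{eq:E0R}.
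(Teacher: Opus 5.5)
Your argument is correct, and it takes a genuinely different route from the paper's. You argue topologically, in the style of Eilenberg--Niven and Jou.

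\textbf{Your route.} The locally-complex hypothesis enters only through $|x^2|=|x|^2$. This gives the a priori bound for the homotopy $x^2+s(f_1(x)+r)$, and the remaining input is that squaring has degree $2$. Your Jacobian computation is right. The step you flagged is easier than you expect: polarizing $v^2=-\norm(v)$ for purely imaginary $v$ gives $uh+hu=-2\langle u,h\rangle$, so no appeal to flexibility is needed.

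\textbf{The paper's route.} The paper works entirely in the real coordinates of \eqref{eq:E_0}--\eqref{eq:E_m}.
\begin{enumerate}
\item It solves \eqref{eq:E} by Cramer's rule, obtaining rational functions $\vec x(t)$ of $t=\tr(x)/2$.
\item It notes that $\mathcal H(\vec x(t),t)\to-\infty$ as $t\to+\infty$.
\item It examines $t_0=\max S$. Either $\vec x(t)$ has a finite limit as $t\to t_0^+$, and that limit lies inside the sphere \eqref{eq:E0R} on the positive-dimensional affine solution set of \eqref{eq:E} at $t_0$. Or some coordinate blows up, so $\mathcal H\to+\infty$ and the intermediate value theorem gives a zero on $(t_0,\infty)$.
\end{enumerate}
This is essentially your fallback with $h=-\mathcal H$, organized around $\max S$ rather than a split into $S\setminus S'$ and $S'$. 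Both the paper's argument and your fallback tacitly need $S\neq\emptyset$. This holds because $D(t)=\det(2tI+(g_{i,k}))$ has odd degree $m=2^n-1$, the same parity that makes your factor $(2a)^{2^n-2}$ positive.

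\textbf{What each buys.} The paper's route is elementary and constructive. It is the same computation that describes the zero set in Theorem~\ref{geom-of-zeros}, and it is what the authors rely on to compute right eigenvalues explicitly. Yours is non-constructive but shorter and more robust. It uses nothing about $f_1$ beyond $|f_1(x)|\le C|x|$, so it covers $x^2+\phi(x)$ for any continuous $\phi$ with $|\phi(x)|=o(|x|^2)$. It also avoids the case analysis at degenerate traces altogether.
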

\begin{proof}
    Suppose by way of contradiction that there is no solution to $f(x) = 0$. Write $\vec{x}(t) = (x_1(t),\dots,x_m(t))$ as the general solution formula to \eqref{eq:E} given by Cramer. Recall that $x_i(t)=D_i(t)/D(t)$, where $\deg{D_i(t)}\le\deg{D(t)}$, so that $\lim_{t\to+\infty} \vec{x}(t) = \vec{x_0}$ for some $\vec{x_0} \in \mathbb{R}^m$.
    
    Denote by $\mathcal{H}(x_1,\dots,x_m,t)$ the left-hand side of \eqref{eq:E0R}. Denote $t_0 := \max(S)$. Notice that  
    \[ \lim_{t\to+\infty} \mathcal{H}(\vec{x}(t),t) = -\infty,\]
    as $\lim_{t\to+\infty} \vec{x}(t) = \vec{x_0}$ and the rest of the expression is dominated by $-t^2$. Also, $\mathcal{H}(\vec{x}(t),t)$ is continuous in the interval $(t_0,+\infty)$, so $\mathcal{H}(\vec{x}(t),t) < 0$ for $t \in (t_0,+\infty)$, because otherwise there exists $s \in (t_0,+\infty)$ such that $\mathcal{H}(\vec{x}(s),s) = 0$, but then $s$ is a solution to the system of equations \eqref{eq:E_0}--\eqref{eq:E_m}, so it is a zero of $f$, contradiction. 
    
    Now consider $\lim_{t\to t_0^{+}} \vec{x}(t)$. If the limit of each coordinate is finite, i.e., if $\lim_{t\to t_0^{+}} \vec{x}(t) = \vec{p_0} \in \mathbb{R}^m$, then $\vec{p_0}$ is a solution to the matrix equation \eqref{eq:E}, but this happens when $t = t_0 \in S$ is plugged in \eqref{eq:E}, so it cannot be the only solution, so the solutions to \eqref{eq:E} are rather an affine subspace that passes through $p_0$. Moreover, $\mathcal{H}(\vec{p_0},t_0) < 0$, so $\vec{p_0}$ is in the interior of the sphere defined in \eqref{eq:E0R}. Thus, at $t=t_0$, the affine subspace of solutions to \eqref{eq:E} and the sphere \eqref{eq:E0R} have a non-empty intersection, and the points of this intersection are zeros of $f$, a contradiction. 
    
    Thus, it must be that $\lim_{t\to t_0^{+}} x_i(t) = \pm \infty$ for some $i \in \set{1,\dots,m}$, in which case $\lim_{t\to t_0^{+}} \mathcal{H}(\vec{x}(t),t) = +\infty$, which is a contradiction to $\mathcal{H}(\vec{x}(t),t) < 0$ for $t \in (t_0,+\infty)$. The theorem is thus proven.
\end{proof}

\section{Right Eigenvalues of \texorpdfstring{$2\times 2$}{2x2} octonion matrices} \label{sec:right-eigenvalues}
 \label{sec:eigenvalues}

Given a square matrix $A$ with entries in an algebra $\mathcal{A}$, not necessarily associative, we say $\lambda\in\mathcal{A}$ is a right eigenvalue of $A$ if there exists a solution $\vec{v}\in\mathcal{A}^n$ to the equation $A\vec{v}=\vec{v}\lambda$, and it is a left eigenvalue of $A$ if there exists a solution $\vec{v}\in\mathcal{A}^n$ to the equation $A\vec{v}=\lambda \vec{v}$. As in classical linear algebra, $\lambda$ is a left eigenvalue if and only if the matrix $\lambda I - A$ is singular (but this is not true for \emph{right} eigenvalues). 

Let $\sigma_{L,\mathcal{A}}(A)$ and $\sigma_{R,\mathcal{A}}(A)$ denote the sets of left and right eigenvalues of the matrix $A$ over the algebra $\mathcal{A}$, respectively. (We omit the subscript $\mathcal{A}$ when the specific algebra is clear from context.)

The importance of right eigenvalues becomes apparent when discussing diagonalization of matrices, or at least the well known equation 
\[AU=UD,\]
where $D=\mathrm{diag}(\lambda_1,\ldots,\lambda_n)$. Clearly, $\lambda_1,\ldots,\lambda_n\in\sigma_{R,\mathcal{A}}(A)$.


In \cite{ChapmanVishkautsan:2025}, it was shown that every $2\times 2$ matrix over $\mathbb{O}$ admits a left eigenvalue. In this section, we extend this result to right eigenvalues as well.

\begin{thm}
    Every $A \in M_2(\mathbb{O})$ admits a right eigenvalue.
\end{thm}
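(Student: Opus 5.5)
The plan is to reduce the right eigenvalue equation to a monic quadratic general equation over $\mathbb{O}$ and then invoke Theorem~\ref{thm:real-quadratic}. Write
\[A=\begin{pmatrix} a & b\\ c & d\end{pmatrix},\qquad a,b,c,d\in\mathbb{O},\]
and look for $\vec v=(v_1,v_2)^T\neq 0$ and $\lambda\in\mathbb{O}$ with
\[a v_1+bv_2=v_1\lambda \quad\text{and}\quad cv_1+dv_2=v_2\lambda .\]

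First we dispose of the degenerate cases.
\begin{enumerate}
\item[(1)] If $b=0$, then $\vec v=(0,1)^T$ and $\lambda=d$ satisfy both equations.
\item[(2)] If $c=0$, then $\vec v=(1,0)^T$ and $\lambda=a$ satisfy both equations.
\end{enumerate}
So we may assume $b\neq 0$. Since $\mathbb{O}$ is a division algebra, $\norm(b)\neq 0$ and $b^{-1}=\bar b/\norm(b)$.

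Next, with $b\neq 0$, we normalize $v_1=1$. This avoids any rescaling of an eigenvector, which is delicate in a nonassociative setting. The first equation then forces $\lambda=a+bv_2$. Substituting into the second equation gives
\[c+dv_2=v_2a+v_2(bv_2).\]
To turn the quadratic term $v_2(bv_2)$ into a square, we use flexibility, which gives $v_2(bv_2)=(v_2b)v_2$. We then substitute $y=v_2b$, so that $v_2=y\bar b/\norm(b)$. The right inverse property of alternative algebras makes this substitution bijective.

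The key computation uses left alternativity:
\[(v_2b)v_2=\frac{y(y\bar b)}{\norm(b)}=\frac{y^2\bar b}{\norm(b)}.\]
The equation therefore becomes
\[\norm(b)\,c+d(y\bar b)-(y\bar b)a=y^2\bar b.\]
We now multiply on the right by $b$. Since $\bar b=\tr(b)-b$, right alternativity gives $(u\bar b)b=u(\bar b b)=\norm(b)u$ for every $u$. We obtain
\[y^2-\frac{1}{\norm(b)}\bigl((d(y\bar b))b\bigr)+\frac{1}{\norm(b)}\bigl(((y\bar b)a)b\bigr)-cb=0.\]
This is a monic quadratic general polynomial in $y$ over $\mathbb{O}$: it has the single quadratic term $y^2$, two linear terms, and the constant term $-cb$. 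Right multiplication by $b\neq 0$ is injective, so this equation is equivalent to the previous one.

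By Theorem~\ref{thm:real-quadratic}, applied with $\mathbb{O}=A_3$, which is locally-complex, there is a root $y_0$. Set
\[v_2=\frac{y_0\bar b}{\norm(b)},\qquad \lambda=a+bv_2,\qquad \vec v=(1,v_2)^T\neq 0 .\]
Reversing the equivalences then shows $A\vec v=\vec v\lambda$.

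The main obstacle is the nonassociative bookkeeping in the reduction. Each rebracketing must be justified by alternativity, flexibility, or the identity $(u\bar b)b=\norm(b)u$, and every step must be reversible, so that a root of the quadratic genuinely yields an eigenpair. In particular, we must check that $x\mapsto xb$ and $x\mapsto x\bar b$ are bijections, which holds because $\mathbb{O}$ is a division algebra. These steps are where the argument would break if it were extended to the sedenions.
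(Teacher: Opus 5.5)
Your proof is correct and follows essentially the same route as the paper: normalize the first coordinate of the eigenvector to $1$, eliminate $\lambda$ to get $xbx+xa-dx-c=0$, make this monic by a substitution, and invoke the existence of roots of monic quadratic general polynomials over $\mathbb{O}$. The differences are cosmetic. The paper multiplies on the left by $b$ and sets $y=bx$, justified by Artin's theorem, and cites Wilczy{\'n}ski for existence. You multiply on the right and set $y=v_2b$, justified by flexibility and the alternative laws, and cite Theorem~\ref{thm:real-quadratic} directly; your separate treatment of $c=0$ is harmless but unnecessary.
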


\begin{proof}
Write $A=\left(\begin{array}{cc}
    a & b \\
    c & d
\end{array}\right)$.
    When $b=0$, $d$ is clearly a right eigenvalue, with eigenvector $\vec{v}=\left( \begin{array}{cc}
         0  \\
         1 
    \end{array}\right).$ So, assume $b \neq 0$.
    It is enough to find $x$ and $\lambda$ in $\mathbb{O}$ for which $A \left( \begin{array}{cc}
         1  \\
         x 
    \end{array} \right)=\left( \begin{array}{cc}
         1  \\
         x 
    \end{array} \right)\lambda$.
The requirement above translates to the system of equations
$$ \begin{array}{lcr}
   a+bx=\lambda   \\
    c+dx=x\lambda.
\end{array}$$
Substituting the second equation in the first gives
$$c+dx=x(a+bx), $$
which gives
\begin{equation} \label{eq:right-eigenvalues}
  0=xbx+xa-dx-c.  
\end{equation}
Multiply by $b$ from the left and set $y=bx$ to get (recall that by Artin's theorem, any two elements in an alternative algebra generate an associative subalgebra). 
$$0=y^2+b((b^{-1}y)a)-b(d(b^{-1}y))-bc.$$
The latter has a solution for some $y$ in $\mathbb{O}$ by \cite{Wilczynski:2014}, and can be computed explicitly using the method in Theorem~\ref{thm:real-quadratic}. The choice of $x=b^{-1} y$ and $\lambda=a+bx$ provides a solution to the original system. This $\lambda$ is thus a right eigenvalue of $A$. 
\end{proof}

\begin{rem}
The scalar multiplication and change of variables in the proof are unnecessary to prove the \emph{existence} of a right eigenvalue. Indeed, the general polynomial equation \eqref{eq:right-eigenvalues} already satisfies the hypotheses for the existence of a root in Jou's theorem \cite{jou1950}. However, they allow us to apply Theorem~\ref{thm:real-quadratic} to explicitly find a right eigenvalue.
\end{rem}

\begin{exmpl}
    Consider the matrix $A=\left( \begin{array}{cc}
         1 & i \\
         1+2i+j & 1+\ell
    \end{array}\right).$
    The equation obtained by the proof of the previous theorem is 
    $$0=y^2+\ell y+2-i-ij,$$
    which has a solution $y=(i+ij)\ell$. Therefore, $x=-iy=((i+ij)(-i))\ell=(1-j)\ell$, and so $\lambda=1+ix=1+(i+ij)\ell$ is a right eigenvalue of $A$.
\end{exmpl}

\section{Octonion eigenvalues of triangular matrices} \label{sec:triangular}

In \cite{ChapmanVishkautsan:2025}
there is an inaccurate statement and incorrect proof of Lemma 3.2, regarding the left spectrum of a triangular matrix. We correct this here by providing a correct proof, and we extend the discussion to the right spectrum of these matrices.

\begin{prop}
    Let $B\in M_n(O)$ be an upper (lower) triangular matrix where $O$ is an octonion division algebra over any field $F$. Then $\sigma_L(B)=\mathrm{diag}(B).$
\end{prop}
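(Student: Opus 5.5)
Everything reduces to one fact about octonion division algebras. Write $B=(b_{ij})$ upper triangular; the lower triangular case is the same argument with the coordinates taken in reverse order. The fact is this: for $a,v\in O$ we have $av=0$ only if $a=0$ or $v=0$. This is immediate from $\norm(av)=\norm(a)\norm(v)$ and anisotropy of the norm.

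For $\lambda\in O$, by definition $\lambda\in\sigma_L(B)$ means there is a nonzero $\vec v=(v_1,\dots,v_n)^T\in O^n$ with $B\vec v=\lambda\vec v$. Equivalently, $(B-\lambda I)\vec v=0$ coordinatewise, since $\lambda v_i$ is subtracted entrywise and no associativity is involved. So the $i$-th equation reads
\[(b_{ii}-\lambda)v_i+\sum_{j>i} b_{ij}v_j=0 .\]

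\textbf{Inclusion $\sigma_L(B)\subseteq \mathrm{diag}(B)$.} Suppose $\lambda\neq b_{ii}$ for all $i$. Solve by back substitution from $i=n$ down to $1$:
\begin{itemize}
\item The last equation is $(b_{nn}-\lambda)v_n=0$. Since $b_{nn}-\lambda\neq 0$ and $O$ has no zero divisors, $v_n=0$.
\item Inductively, if $v_{i+1}=\dots=v_n=0$, the $i$-th equation reduces to $(b_{ii}-\lambda)v_i=0$, hence $v_i=0$.
\end{itemize}
Thus $\vec v=0$, a contradiction, so every left eigenvalue is a diagonal entry.

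\textbf{Inclusion $\mathrm{diag}(B)\subseteq\sigma_L(B)$.} Let $\lambda=b_{kk}$; we build an eigenvector with $v_{k+1}=\dots=v_n=0$ and $v_k=1$. Then equations $i\ge k$ hold trivially, the $k$-th because $(b_{kk}-\lambda)\cdot 1=0$. For $i<k$ we must solve, in descending order of $i$,
\[(b_{ii}-\lambda)v_i=-\sum_{i<j\le k} b_{ij}v_j .\]
The right-hand side is already determined by the earlier choices $v_{i+1},\dots,v_k$. If $b_{ii}\neq\lambda$, we solve with $v_i=-(b_{ii}-\lambda)^{-1}\big(\sum_{i<j\le k} b_{ij}v_j\big)$. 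This is legitimate because in an alternative division algebra $a^{-1}(aw)=w$ by Artin's theorem (the subalgebra generated by $a$ and $w$ is associative), so left multiplication by a nonzero $a$ is invertible.

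\textbf{Main obstacle.} The step above breaks down when $b_{ii}=\lambda=b_{kk}$ for some $i<k$: the coefficient of $v_i$ vanishes while the right-hand side may be nonzero. I would handle this by taking $k$ to be the \emph{smallest} index with $b_{kk}=\lambda$. Then every $i<k$ has $b_{ii}\neq\lambda$, so the recursion never hits a zero coefficient, and the same $\lambda$ is realized. This choice of the minimal repeated index is presumably the point that was mishandled in the earlier lemma.

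Finally, $\vec v\neq0$ because $v_k=1$, so $b_{kk}\in\sigma_L(B)$. For the lower triangular case, choose the largest index with $b_{kk}=\lambda$ and solve forward instead.
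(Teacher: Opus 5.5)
Your proposal is correct and follows essentially the same proof as the paper. For $\sigma_L(B)\subseteq\mathrm{diag}(B)$ you run back substitution with the absence of zero divisors, which is equivalent to the paper's look at the last nonzero coordinate and cancellation of $c_k$. For the reverse inclusion you take the minimal index $k$ with $b_{kk}=\lambda$, set $v_k=1$ and $v_i=0$ for $i>k$, and back-substitute using $(b_{ii}-\lambda)^{-1}$, exactly as the paper does.
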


\begin{proof}
    Let 
    \[ B = 
\begin{pmatrix}
a_{11} & a_{12} & a_{13} & \cdots & a_{1n} \\
0      & a_{22} & a_{23} & \cdots & a_{2n} \\
0      & 0      & a_{33} & \cdots & a_{3n} \\
\vdots & \vdots & \vdots & \ddots & \vdots \\
0      & 0      & 0      & \cdots & a_{nn}
\end{pmatrix},
\]
and let $\vec{v} = (c_1,\ldots,c_n)^T\in O^n$ be an eigenvector associated with a left eigenvalue $\lambda$. Let $1\le k \le n$ be the minimal integer such that $c_k \ne 0$ and $c_i = 0$ for $k<i\le n$. Then the $k$th row of the equality $B\vec{v}=\lambda \vec{v}$ is $a_{kk}c_k = \lambda c_k$. Since $O$ is an alternative division algebra we can cancel $c_k$ on the right and obtain $\lambda = a_{kk}$, proving $\sigma_L(B)\subseteq\mathrm{diag}(B).$

In the other direction, given $\lambda = a_{mm},$ for some $1\le m\le n$, set $k$ to be the minimal integer between $1$ and $m$ such that $a_{kk} = a_{mm}$ we need to show that there exists an eigenvector $\vec{v}= (c_1,\ldots,c_n)^T\in O^n$ associated with it. We set $c_k = 1$ and $c_i=0$ for $k<i\le n,$ and define $c_i$ for $1\le i < k$ by backwards induction, using the $i$th row equation
$c_i = (a_{ii}-\lambda)^{-1}(-a_{i,i+1}c_{i+1}-\ldots-a_{i,k-1}c_{k-1}-a_{ik}).$ Therefore by construction $B\vec{v}=\lambda \vec{v}$ and thus we proved that $\sigma_L(B)\supseteq\mathrm{diag}(B).$
\end{proof}

\begin{prop} \label{prop:triangular-right}
    Let $B\in M_n(O)$ be an upper (lower) triangular matrix where $O$ is an octonion division algebra over any field $F$. Then $\sigma_R(B)=\{[d]: d \in \mathrm{diag}(B)\},$ where $[d]$ is the conjugacy class of $d$ in $O$, i.e. $[d]=\{qdq^{-1} : q\in O\}$.
\end{prop}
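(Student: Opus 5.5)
The plan is to mirror the proof of the left-spectrum proposition, with two new ingredients. The first replaces cancellation by conjugation. The second is an invertibility statement for the $F$-linear map $c\mapsto ac-c\lambda$ on $O$, which replaces the inversion of $a_{ii}-\lambda$. Throughout I use Artin's theorem: any two elements of $O$ generate an associative subalgebra. Consequently $qdq^{-1}$ is unambiguous, and $[d]$ is well defined. I treat the upper triangular case; the lower triangular case is symmetric, with the roles of the first and last indices exchanged.

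\emph{Inclusion $\sigma_R(B)\subseteq\bigcup_{d}[d]$.} Let $B\vec v=\vec v\lambda$ with $\vec v=(c_1,\ldots,c_n)^T\neq 0$. Let $k$ be the largest index with $c_k\neq 0$. Row $k$ reads $a_{kk}c_k=c_k\lambda$. Multiplying on the left by $c_k^{-1}$ gives $\lambda=c_k^{-1}(a_{kk}c_k)$, by alternativity, since $c_k^{-1}$ and $c_k$ lie in a common associative subalgebra: $c_k^{-1}(c_k\lambda)=\lambda$. The expression $c_k^{-1}(a_{kk}c_k)$ equals $qa_{kk}q^{-1}$ with $q=c_k^{-1}$, because $c_k$ and $a_{kk}$ generate an associative subalgebra. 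Hence $\lambda\in[a_{kk}]$.

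\emph{Inclusion $\supseteq$.} Fix $\lambda\in[a_{mm}]$ for some $m$, and let $k\le m$ be minimal with $\lambda\in[a_{kk}]$. Write $\lambda=p^{-1}a_{kk}p$. Set $c_k=p$, so that $a_{kk}c_k=c_k\lambda$, and set $c_i=0$ for $i>k$. Rows $i>k$ then hold trivially, and row $k$ holds by construction. For $i=k-1,\ldots,1$ I define $c_i$ by backward induction, requiring
\[
a_{ii}c_i-c_i\lambda=-\sum_{j=i+1}^{k}a_{ij}c_j .
\]
This requires the $F$-linear map $L_i\colon O\to O$, $L_i(c)=a_{ii}c-c\lambda$, to be surjective.

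This surjectivity is the main obstacle, since $a_{ii}-\lambda$ no longer enters as a left factor. Because $O$ is finite dimensional over $F$, it suffices to show that $L_i$ is injective. Suppose $L_i(c)=0$ with $c\neq 0$. Then $a_{ii}c=c\lambda$, and the argument of the first inclusion gives $\lambda=c^{-1}(a_{ii}c)\in[a_{ii}]$. This contradicts the minimality of $k$, since $i<k$. So each $L_i$ is bijective, every $c_i$ exists, and $B\vec v=\vec v\lambda$ with $c_k\ne0$. Hence $\lambda\in\sigma_R(B)$.
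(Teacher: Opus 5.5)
Your proof is correct and follows essentially the same route as the paper: conjugation at the last nonzero coordinate gives $\subseteq$, and backward substitution gives $\supseteq$, using that the Sylvester-type map $c\mapsto a_{ii}c-c\lambda$ is injective and hence bijective. You choose $k$ minimal with $\lambda\in[a_{kk}]$, whereas the paper takes $k$ minimal with $a_{kk}\in[a_{mm}]$; your choice is a small improvement, because it avoids the paper's tacit use of the fact that conjugacy in $O$ is symmetric and transitive.
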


\begin{proof}
    Let 
    \[ B = 
\begin{pmatrix}
a_{11} & a_{12} & a_{13} & \cdots & a_{1n} \\
0      & a_{22} & a_{23} & \cdots & a_{2n} \\
0      & 0      & a_{33} & \cdots & a_{3n} \\
\vdots & \vdots & \vdots & \ddots & \vdots \\
0      & 0      & 0      & \cdots & a_{nn}
\end{pmatrix},
\]
and let $\vec{v} = (c_1,\ldots,c_n)^T\in O^n$ be an eigenvector associated with a right eigenvalue $\lambda$. Let $1\le k \le n$ be the minimal integer such that $c_k \ne 0$ and $c_i = 0$ for $k<i\le n$. Then the $k$th row of the equality $B\vec{v}=\vec{v}\lambda$ is $a_{kk}c_k = c_k\lambda$. We thus get $\lambda = c_k^{-1}a_{kk}c_k$ (parentheses are unnecessary due to alternativity), and therefore $\lambda \in [a_{kk}]$. We have thus proved that $\sigma_R(B)\subseteq\{[d]: d \in \mathrm{diag}(B)\}$.

In the other direction, let $1\le m\le n$, and let $1\le k \le m$ be the minimal integer such that $a_{kk}\in [a_{mm}]$. Set $\lambda = q^{-1}a_{kk}q$ for some $q\ne 0$ (this is a general element in $[a_{mm}]$ as well). We define the eigenvector $v = (c_1,\ldots,c_n)^T\in O^n$, by first setting $c_k = q$ and $c_i = 0$ for $k<i\le n$; then we define $c_i, 1\le i<k$ by backwards substitution: in row $k-1$ we get the following equation 
\begin{equation} \label{triangular-right-eigenvalue-eq1}
    a_{k-1,k-1}c_{k-1} - c_{k-1}\lambda = -a_{k-1,k}q.
\end{equation}
This is a sylvester type equation of the form $ax-xb=c$, where here $x=c_{k-1}$. The function $f(x)=ax-xb$ induces a linear transformation from $F^8$ to itself. The transformation is singular if and only if there exists $0\ne s\in O$ such that $f(s)=0$, and this is equivalent to $b=s^{-1}as$, i.e. if and only if $b\in [a]$. By the choice of $k$, we know that $a_{k-1,k-1}\notin [a_{kk}]=[\lambda]$; this proves that the linear transformation is non-singular, and there exists a solution $c_{k-1}$ for \eqref{triangular-right-eigenvalue-eq1}.
A general equation for row $1\le i<k$ takes the form 
\[a_{ii}c_i - c_i\lambda = -a_{i,i+1}c_{i+1}-\ldots-a_{i,k-1}c_{k-1} - a_{i,k}q ,\] 
and allows us to proceed by backwards induction, proving the existence of an eigenvector associated with $\lambda$. This proves $\sigma_R(B)\supseteq\{[d]: d \in \mathrm{diag}(B)\}$.
\end{proof}

\bibliographystyle{abbrv}
\bibliography{bibfile}
\end{document}